\newtheorem{theorem}{Theorem}[section]
\newtheorem{lemma}[theorem]{Lemma}
\theoremstyle{definition}
\newtheorem{remark}[theorem]{Remark}
\def\XXint#1#2#3{{\setbox0=\hbox{$#1{#2#3}{\int}$}
     \vcenter{\hbox{$#2#3$}}\kern-.5\wd0}}
\def\ud{{\rm\,d}}
\def\N{\mathbb{N}}
\def\R{\mathbb{R}}
\def\Sph{\mathbb{S}}
\def\OO{\mathcal{O}}
\newcommand{\bs}[1]{\boldsymbol{#1}}
\def\pr(#1){\left({#1}\right)}
\def\br[#1]{\left[{#1}\right]}
\def\abs#1{\left|{#1}\right|}
\def\norm#1{\left\|{#1}\right\|}
\def\for{\hbox{ for }}
\begin{document}

\title{A rapid and well-conditioned algorithm for the Helmholtz--Hodge decomposition of vector fields on the sphere}

\author{Julien Molina and Richard Mika\"el Slevinsky\thanks{Corresponding author. Email: Richard.Slevinsky@umanitoba.ca}\\[2pt] Department of Mathematics, University of Manitoba, Winnipeg, Canada}

\maketitle

\section{The Helmholtz--Hodge decomposition}

Canonical decompositions of a vector field $\bs{V}$ on the unit sphere $\Sph^2\subset\R^3$ exist in Cartesian and spherical coordinates,
\begin{align}
\bs{V}(\bs{r}) & = V^x\bs{e_x} + V^y\bs{e_y} + V^z\bs{e_z},\label{eq:CartesianDecomposition}\\
& = V^r\bs{e_r} + V^\theta\bs{e_\theta} + V^\varphi\bs{e_\varphi}.\label{eq:SphericalDecomposition}
\end{align}
Such canonical decompositions are useful because their numerical evaluation is usually directly available. On the other hand, another decomposition is more useful in the context of vector calculus.

\begin{theorem}[The Helmholtz--Hodge decomposition~\cite{Bhatia-et-al-19-1386-13}]
Every continuous vector field $\bs{V}\in [C(\Sph^2)]^3$ may be decomposed into radial, spheroidal, and toroidal components as
\begin{equation}\label{eq:HelmholtzHodgeDecomposition}
\bs{V}(\bs{r}) = V^r\bs{e_r} + \nabla V^s + \bs{e_r}\times\nabla V^t.
\end{equation}
\end{theorem}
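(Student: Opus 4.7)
The plan is to first peel off the radial component and then apply a Hodge-type decomposition to the remaining tangent vector field. Define $V^r := \bs V \cdot \bs e_r$, which is continuous on $\Sph^2$, and set $\bs V_T := \bs V - V^r\bs e_r$. By construction $\bs V_T$ is a continuous tangent vector field, so the problem reduces to showing that every such $\bs V_T$ can be written as $\nabla V^s + \bs e_r\times\nabla V^t$ for two scalar potentials $V^s,V^t$, where the gradients are understood tangentially on $\Sph^2$.

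To obtain $V^s$ and $V^t$ I would take surface divergence and surface curl of the proposed identity. Using the identities $\nabla_{\Sph^2}\cdot\nabla f = \Delta_{\Sph^2} f$ and $\nabla_{\Sph^2}\cdot(\bs e_r\times\nabla g) = 0$, together with the dual pair $(\bs e_r\cdot\nabla\times\nabla f) = 0$ and $\bs e_r\cdot\nabla\times(\bs e_r\times\nabla g) = -\Delta_{\Sph^2} g$, one reduces the claim to the two scalar Poisson problems
\begin{equation*}
\Delta_{\Sph^2} V^s = \nabla_{\Sph^2}\cdot\bs V_T,\qquad \Delta_{\Sph^2} V^t = -\bs e_r\cdot\nabla\times\bs V_T,
\end{equation*}
posed on the closed manifold $\Sph^2$. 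By the divergence theorem on a closed surface each right-hand side integrates to zero, so the compatibility condition for the Laplace--Beltrami operator (whose kernel is exactly the constants) is satisfied and each equation admits a solution unique up to an additive constant.

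A clean way to package solvability and uniqueness is to expand in vector spherical harmonics: the families $\{Y_\ell^m\bs e_r\}$, $\{\nabla Y_\ell^m\}_{\ell\geq 1}$, and $\{\bs e_r\times\nabla Y_\ell^m\}_{\ell\geq 1}$ form a complete orthogonal system for square-integrable vector fields on $\Sph^2$, and the projections onto the last two families define $V^s$ and $V^t$ coefficient-wise via division by $-\ell(\ell+1)$, which is precisely the spectrum of $\Delta_{\Sph^2}$ away from constants. Completeness then yields the decomposition.

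The main obstacle is regularity: the hypothesis is only $\bs V\in [C(\Sph^2)]^3$, so $\nabla_{\Sph^2}\cdot\bs V_T$ need not exist classically, and the scalar equations must be interpreted weakly (or the decomposition built by $L^2$-completeness and then shown to reproduce $\bs V$ pointwise via a density argument using that smooth tangent fields are dense in continuous tangent fields on $\Sph^2$). Once this functional-analytic step is handled, the identity \eqref{eq:HelmholtzHodgeDecomposition} follows by combining the radial extraction with the tangential Hodge decomposition, and the topological vanishing $H^1(\Sph^2)=0$ ensures that no harmonic remainder is left behind.
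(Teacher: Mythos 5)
The paper does not prove this theorem at all: it is stated as a quoted result with a citation to Bhatia et al., and the body of the paper immediately describes (and then argues against, as a numerical method) exactly the route you take --- apply the surface divergence and the normal component of the surface curl and solve the two resulting Poisson equations. So your proposal is the ``classical algorithm'' proof that the paper attributes to its reference rather than reproving; structurally there is nothing different to compare against, and your reduction to $\Delta_{\Sph^2}V^s=\nabla_{\Sph^2}\cdot\bs{V}_T$ and $\Delta_{\Sph^2}V^t=-\bs{e_r}\cdot\nabla\times\bs{V}_T$, the compatibility condition on the closed surface, and the vector-spherical-harmonic completeness argument are all standard and correct for sufficiently smooth fields.

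The one substantive issue is the regularity step you flag at the end but do not close. For $\bs{V}$ merely in $[C(\Sph^2)]^3$ the weak formulation and elliptic theory give $V^s,V^t$ in Sobolev spaces, but the projection onto the spheroidal and toroidal subspaces is not a bounded operator on continuous vector fields (this is the spherical analogue of the failure of the Leray/Riesz projections on $L^\infty$ and $C^0$), so your proposed density argument --- approximate $\bs{V}_T$ uniformly by smooth fields and pass to the limit --- does not go through as written: the decomposed pieces of the approximants need not converge uniformly, and $\nabla V^s$ and $\bs{e_r}\times\nabla V^t$ need not individually be continuous even though their sum is. To make the statement literally true as the paper phrases it you must either interpret the identity in a distributional or $L^2$ sense, or strengthen the hypothesis (e.g.\ H\"older continuity, under which Schauder estimates give $V^s,V^t\in C^{1,\alpha}$ and the classical identity holds pointwise). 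Since the paper simply cites the result, this is a gap in the theorem's bookkeeping as much as in your argument, but it is the genuine mathematical content of the ``functional-analytic step'' you deferred.
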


The Helmholtz--Hodge decomposition (HHD) is unique up to two constants, one each for the average values of $V^s$ and $V^t$. In this paper, we minimize the $L^2(\Sph^2)$ norms of the scalar fields by setting the constants to zero.

The classical algorithm~\cite{Bhatia-et-al-19-1386-13} for the HHD is to apply either the divergence or the (normal component of the) curl and solve the resulting Poisson equations independently. This classical algorithm is conceptually simple, but has a few drawbacks. Firstly, when using spherical harmonics, the classical algorithm is immediately applicable for converting the Cartesian decomposition from Eq.~\eqref{eq:CartesianDecomposition} to Eq.~\eqref{eq:HelmholtzHodgeDecomposition}; however, when working in the tangent space to $\Sph^2$, it is redundant to carry around a three-component vector field that is tacitly assumed to satisfy $\bs{V}\cdot\bs{e_r}=0$. Secondly, due to the coordinate singularity, the spherical decomposition from Eq.~\eqref{eq:SphericalDecomposition} to Eq.~\eqref{eq:HelmholtzHodgeDecomposition} has been discredited as being unable to support a stable algorithm~\cite{Swartztrauber-18-191-81}. Finally, the necessity to take the divergence and curl and the solution of Poisson equations requires differentiability and thereby introduces the possibility of non-optimal error growth, with respect to the truncation degree.

The present contribution addresses all three of the drawbacks of the classical algorithm for the HHD. Since the gradient and curl of a scalar field on $\Sph^2$ are tangential to the surface, this paper addresses the conversion of the two representations
\begin{equation}\label{eq:SpheroidalToroidal}
V^\theta\bs{e_\theta} + V^\varphi\bs{e_\varphi} = \nabla V^s + \bs{e_r}\times\nabla V^t,
\end{equation}
that is, we suppose that the radial component has been resolved independently. The drawbacks are addressed by working with suitable orthonormal bases for the angular components of the vector field in spherical coordinates and by inverting Eq.~\eqref{eq:SpheroidalToroidal} directly, rather than solving Poisson equations. The algorithm uncouples modes of spherical harmonics with different absolute order, writes the inversion of Eq.~\eqref{eq:SpheroidalToroidal} as barely-overdetermined\footnote{Our notion of a barely-overdetermined linear system is a rectangular system with precisely two extra rows, independent of the total dimensions.} banded linear least-squares systems, and solves them with banded $QR$ decompositions that factor and execute in optimal complexity. Rigorous upper bounds on the $2$-norm relative condition number of the banded linear systems demonstrate the low error growth with truncation degree and build confidence in the approach.

Vector spherical harmonics~\cite{Barrera-Estevez-Giraldo-6-287-85} are a vector-valued basis for vector fields in $[L^2(\Sph^2)]^3$ that already conform to the HHD. Thus, synthesis and analysis with vector spherical harmonics are a natural direct solution to the HHD, and several software libraries offer such computational routines, see e.g.~\cite{Schaeffer-14-751-13}. However, a slight modification of the fast and backward stable (scalar) spherical harmonic transforms of Slevinsky~\cite{Slevinsky-ACHA-17,Slevinsky-1711-07866} allow for synthesis and analysis with our to-be-defined auxiliary orthonormal basis. Furthermore, they are preferable due to the lower error growth as a function of the truncation degree.

\section{Spherical harmonics}
The unit sphere $\Sph^2$ may be parameterized by the co-latitudinal angle $\theta\in[0,\pi]$ and the longitudinal angle $\varphi\in[0,2\pi)$. Real spherical harmonics~\cite{Atkinson-Han-12} are the separable eigenfunctions of the Laplace--Beltrami operator on $\Sph^2$, and are given in terms of associated Legendre functions as
\begin{equation}
Y_{\ell,m}(\theta, \varphi) = \tilde{P}_\ell^{\abs{m}}(\cos\theta) \times \sqrt{\frac{2-\delta_{m,0}}{2\pi}}\times \left\{\begin{array}{ccc}
\cos (m\varphi ) & \for & m \ge 0,\\
\sin (-m\varphi ) & \for & m<0,
\end{array}\right.
\end{equation}
where
\begin{equation}
\tilde{P}^{\abs{m}}_\ell(\cos\theta) = (-1)^{\abs{m}}\sqrt{(\ell+\tfrac{1}{2})\frac{(\ell-\abs{m})!}{(\ell+\abs{m})!}}P_\ell^{\abs{m}}(\cos\theta),
\end{equation}
are $L^2$-normalized associated Legendre functions. Altogether, spherical harmonics form a complete orthonormal basis on $L^2(\Sph^2)$ with respect to the Lebesgue measure. Thus, every function $f\in L^2(\Sph^2)$ may be expanded in this basis
\begin{equation}
f(\theta,\varphi) = \sum_{\ell=0}^{+\infty}\sum_{m=-\ell}^{+\ell}f_{\ell,m} Y_{\ell,m}(\theta,\varphi),
\end{equation}
where the expansion coefficients are given by the inner product
\begin{equation}
f_{\ell,m} = \int_{\Sph^2} f(\theta,\varphi) Y_{\ell,m}(\theta,\varphi)\ud\Omega.
\end{equation}

Now, spherical harmonics may be expanded using precisely half of all the tensor-product Fourier modes. The complement of the Fourier modes that support spherical harmonics may be used altogether to define another orthonormal basis, this time useful to represent the angular components of vector fields, $V^\theta$ and $V^\varphi$, in Eq.~\eqref{eq:SphericalDecomposition}. We call this basis $Z_{\ell,m}$, and they are defined as
\begin{equation}
Z_{\ell,m}(\theta,\varphi) := \tilde{P}^{\abs{\abs{m}-1}}_\ell(\cos\theta) \times \sqrt{\frac{2-\delta_{m,0}}{2\pi}}\times \left\{\begin{array}{ccc}
\cos (m\varphi ) & \for & m \ge 0,\\
\sin (-m\varphi ) & \for & m<0.
\end{array}\right.
\end{equation}

It is easy to verify that the surface gradient
\begin{equation}
\nabla^* = \nabla_\theta\bs{e_\theta}+\nabla_\varphi\bs{e_\varphi} := \partial_\theta \bs{e_\theta} + \csc\theta\partial_\varphi \bs{e_\varphi},
\end{equation}
and the surface curl, $\bs{e_r}\times\nabla^*$, of spherical harmonics may be expanded in this scalar basis with vector-valued coefficients. Ultimately,
\begin{equation}
\bs{V}(\bs{r}) = \sum_{m=-\infty}^{+\infty}\sum_{\ell=\abs{\abs{m}-1}}^{+\infty} \left(V_{\ell,m}^\theta\bs{e_\theta}+V_{\ell,m}^\varphi\bs{e_\varphi}\right)Z_{\ell,m}(\theta,\varphi),
\end{equation}
and the algorithm we describe returns the expansion coefficients in the spheroidal--toroidal decomposition,
\begin{equation}
\bs{V}(\bs{r}) = \sum_{\ell=0}^{+\infty}\sum_{m=-\ell}^{+\ell} \left(V_{\ell,m}^s \nabla Y_{\ell,m}(\theta,\varphi) + V_{\ell,m}^t \bs{e_r}\times \nabla Y_{\ell,m}(\theta,\varphi)\right).
\end{equation}

\section{A rapid algorithm for the HHD}

In case $m=0$, the surface gradient and surface curl are readily separable, thus we focus on $\abs{m}>0$.

We will be dealing with three types of bases: the vector spherical harmonics $\nabla Y_{\ell,m}$ and $\bs{e_r}\times\nabla Y_{\ell,m}$; the new $Z_{\ell,m}$ basis with which we resolve the vector field; and, an intermediary basis $\csc\theta Y_{\ell,m}$ to express both components of the gradient of a vector field. We sum up this relationship in the following diagram:
\begin{equation}
Z_{\ell,m} \underset{1 \ \text{matrix}}{\longleftrightarrow} \csc \theta Y_{\ell,m} \underset{\text{two matrices}}{\longleftrightarrow} \left\{\begin{array}{c} \nabla_\theta Y_{\ell,m}\\ \nabla_\varphi Y_{\ell,m}\end{array}\right..
\end{equation}

\begin{enumerate}
\item First, we examine how to convert expansions in the $Z_{\ell,m}$ basis to expansions in the $\csc \theta Y_{\ell,m}$ basis. Thanks to~\cite[Eq.~(6.6)]{Swartztrauber-18-191-81},
\begin{equation}
Z_{\ell,m} = \alpha_\ell^m\csc \theta Y_{\ell-1,m} +  \beta_\ell^m\csc \theta Y_{\ell+1,m},
\end{equation}
where
\begin{equation}
\alpha_\ell^m = -\sqrt{\frac{(\ell-m)(\ell-m+1)}{(2\ell-1)(2\ell+1)}}\quad{\rm and}\quad \beta_\ell^m = \displaystyle \sqrt{\frac{(\ell+m)(\ell+m+1)}{(2\ell+1)(2\ell+3)}}.
\end{equation}
Therefore, for a component of a vector field, we obtain
\begin{equation}\label{eq:ZlmtocscthetaYlm}
\sum_{m=-n}^{+n}\sum_{\ell=\abs{\abs{m}-1}}^n V_{\ell,m}Z_{\ell,m} = \sum_{\ell=0}^n\sum_{m=-\ell}^{+\ell}\left( V_{\ell+1,m}\alpha_{\ell+1}^m + V_{\ell-1,m}\beta_{\ell-1}^m \right) \csc\theta Y_{\ell,m}.
\end{equation}

\item Next, we examine how to represent the gradient of spherical harmonics in terms of $\csc\theta Y_{\ell,m}$. This requires two different matrices, one for each component.
\begin{enumerate}
\item For $\nabla_\theta$, thanks to~\cite[Eq.~(6.5)]{Swartztrauber-18-191-81},
\begin{equation}
\nabla_{\theta}Y_{\ell,m} = \partial_{\theta}Y_{\ell,m} = \gamma_\ell^m \csc\theta Y_{\ell-1,m} + \delta_\ell^m \csc\theta Y_{\ell+1,m},
\end{equation}
where
\begin{equation}
\gamma_\ell^m = -(\ell+1)\sqrt{\frac{(\ell-m)(\ell+m)}{(2\ell-1)(2\ell+1)}}\quad{\rm and}\quad\delta_\ell^m = \ell\sqrt{\frac{(\ell-m+1)(\ell+m+1)}{(2\ell+1)(2\ell+3)}}.
\end{equation}
Thus, for the finite expansion
\begin{align}
\sum_{\ell=0}^{n-1}\sum_{m=-\ell}^{+\ell}V_{\ell,m}\nabla_{\theta}Y_{\ell,m} & = \sum_{\ell=0}^{n-1}\sum_{m=-\ell}^{+\ell}V_{\ell,m}(\gamma_\ell^m\csc\theta Y_{\ell-1,m} + \delta_\ell^m\csc\theta Y_{\ell+1,m}),\\
& = \sum_{\ell=0}^n\sum_{m=-\ell}^{+\ell}\left( V_{\ell+1,m}\gamma_{\ell+1}^m + V_{\ell-1,m}\delta_{\ell-1}^m \right) \csc\theta Y_{\ell,m}.
\end{align}
For every $1\le \abs{m}\le n-1$, let $A\in\R^{(n+1-m)\times(n-m)}$ be the matrix that represents $\nabla_\theta$:
\begin{equation}
A = \begin{pmatrix}
0 & \gamma_{\vert m \vert +1}^m\\
\delta_{\vert m \vert}^m & 0 & \gamma_{\vert m \vert +2}^m\\
 & \delta_{\vert m \vert +1}^m & 0 & \gamma_{\vert m \vert +3}^m\\
 & & \delta_{\vert m \vert +2}^m & 0 & \ddots\\
 & & & \ddots & \ddots & \gamma_n^m \\
 & & & & \ddots & 0 \\
 & & & & & \delta_n^m\\
\end{pmatrix}.
\end{equation}
\item For $\nabla_\varphi$, we notice that
\begin{equation}
\nabla_{\varphi}Y_{\ell,m} = \csc\theta \partial_{\varphi}Y_{\ell,m} = -m\csc \theta Y_{\ell,-m}.
\end{equation}
Thus, for the finite expansion
\begin{align}
\sum_{\ell=0}^{n-1}\sum_{m=-\ell}^{+\ell}V_{\ell,m}\nabla_\varphi Y_{\ell,m} & = \sum_{\ell=0}^{n-1}\sum_{m=-\ell}^{+\ell}(-m)V_{\ell,m}\csc\theta Y_{\ell,-m},\\
& = \sum_{\ell=0}^{n-1}\sum_{m=-\ell}^{+\ell} \left(m V_{\ell,-m}\right)\csc\theta Y_{\ell,m}.
\end{align}
For every $1\le \abs{m}\le n-1$, let $B\in\R^{(n+1-m)\times(n-m)}$ be the matrix that represents $\nabla_\varphi$:
\begin{equation}
B = \begin{pmatrix}
m\\
 & m\\
 & & \ddots\\
 & & & m\\
0 & \cdots &  & \cdots & 0\\
\end{pmatrix}.
\end{equation}
\end{enumerate}
\end{enumerate}

For every $1\le \abs{m}\le n-1$, let $\tilde{V}_{\ell,m}^\theta$ and $\tilde{V}_{\ell,m}^\varphi$ be the coefficients of the angular components of the vector field in the $\csc\theta Y_{\ell,m}$ basis, the result of back substitution in Eq.~\eqref{eq:ZlmtocscthetaYlm}. From here, the linear system
\begin{equation}\label{eq:HHDsystem}
\begin{pmatrix} A & B\\B & A\end{pmatrix}\begin{pmatrix} V_{:,m}^s & V_{:,-m}^s\\ V_{:,-m}^t & -V_{:,m}^t\end{pmatrix} = \begin{pmatrix} \tilde{V}_{:,m}^\theta & \tilde{V}_{:,m}^\varphi\\ \tilde{V}_{:,-m}^\theta & \tilde{V}_{:,-m}^\varphi\end{pmatrix},
\end{equation}
summarizes the relationships defined by Eq.~\eqref{eq:SpheroidalToroidal}. We use the shorthand notation $V_{:,m}$ to denote the vector containing all pertinent entries of the particular field expansion of order $m$.

The linear system in Eq.~\eqref{eq:HHDsystem} is barely-overdetermined and sparse since $A$ is tridiagonal and $B$ is diagonal. To capitalize on the sparsity, we employ the perfect shuffle permutations $P_1 = I_{2(n+1-m)}[:,\pi_1]$ and $P_2 = I_{2(n-m)}[:,\pi_2]$, where $\pi_1$ and $\pi_2$ are permutations of conformable sizes that collect the odd numbers before the even numbers. Then, the linear systems $P_1 \begin{pmatrix} A & B\\ B & A\end{pmatrix}P_2^\top$ are pentadiagonal. The permutations themselves are applied rapidly as they amount to an interleaving of the input and the output. We solve the overdetermined linear systems via least-squares, employing a $QR$ factorization that respects the banded structure of the permuted system. For every $\abs{m}$, the solution of the least-squares problem takes $\OO(n)$ time to factorize and solve, resulting in the optimal complexity of $\OO(n^2)$ for the total HHD.

\section{On the condition of the algorithm for the HHD}

Let $M$ be the rectangular linear system that we solve for every $m$ in the Helmholtz--Hodge decomposition,
\begin{equation}
M = \begin{pmatrix} A & B\\ B & A\end{pmatrix},
\end{equation}
We are interested in the conditioning of the linear system in particular in terms of the truncation degree $n$ and the order $m$. The $2$-norm relative condition number may be defined in terms of $M^\top M$ by:
\begin{equation}
\kappa_2(M) := \sqrt{ \norm{M^\top M}_2\norm{(M^\top M)^{-1}}_2}.
\end{equation}
Although $M$ is a block rectangular linear system, $M^\top M$ is square,
\begin{equation}
M^\top M = \begin{pmatrix} A^\top A + B^\top B & A^\top B + B^\top A\\ A^\top B + B^\top A & A^\top A + B^\top B\end{pmatrix} =: \begin{pmatrix} C & D\\ D & C\end{pmatrix},
\end{equation}
where $C$ is symmetric and pentadiagonal with no entries on the first sub- and super-diagonals, and $D$ is symmetric and tridiagonal with no entries on the main diagonal. By an analysis of block determinants, $\lambda(M^\top M) = \lambda(C+D)\cup\lambda(C-D) \equiv \lambda(C+D)$, since $C+D$ and $C-D$ are diagonally similar. Therefore,
\begin{equation}
\kappa_2(M) = \sqrt{\kappa_2(C+D)}.
\end{equation}
Normally, this would suffice for an analysis of the conditioning of $C+D$ by the use of Ger\v sgorin discs. However, such an analysis is frustrated for every disc contains the origin when $m=1$. Instead, we find the Cholesky factorization of $C+D = R^\top R$ directly, as
\begin{equation}
R = \begin{pmatrix} d_1 & -e_1 & -f_1\\ & d_2 & -e_2 & -f_2\\ & & d_3 & -e_3 & -f_3\\ & & & \ddots & \ddots & \ddots\end{pmatrix},
\end{equation}
which is easy to confirm {\em a posteriori},
\begin{align}
d_\ell & = (\ell+m-1)\sqrt{\frac{(\ell+m+1)(\ell+2m)(\ell+2m+1)}{(\ell+m)(2\ell+2m-1)(2\ell+2m+1)}},\\
e_\ell & = \sqrt{\frac{\ell(\ell+2m+1)}{(\ell+m)(\ell+m+1)}},\\
f_\ell & = (\ell+m+2)\sqrt{\frac{\ell(\ell+1)(\ell+m)}{(\ell+m+1)(2\ell+2m+1)(2\ell+2m+3)}}.
\end{align}
With $R$ in hand,
\begin{equation}
\kappa_2(M) = \kappa_2(R) = \norm{R}_2\norm{R^{-1}}_2,
\end{equation}
and with this simplification, estimation of the condition number follows naturally.

The following theorem is proved in Appendix~\ref{appendix:kappa}.

\begin{theorem}\label{theorem:kappa}
For every $n\in\N$, let $R\in\R^{n\times n}$. If $m=1$, then
\begin{equation}
\kappa_2(R) \le \left(n+\tfrac{5}{2}\right)\left(4e^{1+\frac{7\pi^2}{8}}\left[2+\log n\right]\right).
\end{equation}
Otherwise, if $m\ge2$, then
\begin{equation}
\kappa_2(R) \le \frac{n+m+\frac{3}{2}}{m-\frac{3}{2}}.
\end{equation}
\end{theorem}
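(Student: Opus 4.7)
The plan is to bound $\norm{R}_2$ and $\norm{R^{-1}}_2$ separately and then multiply, using that $\kappa_2(R) = \norm{R}_2\norm{R^{-1}}_2$ since $R$ is invertible. Because $R$ has at most three nonzero entries in each row and each column, a natural first bound is $\norm{R}_2 \le \sqrt{\norm{R}_1\norm{R}_\infty}$. By direct manipulation of the closed-form expressions for $d_\ell, e_\ell, f_\ell$, I would verify that $e_\ell \le 1$ (the inequality $\ell(\ell+2m+1)\le(\ell+m)(\ell+m+1)$ reduces to $m^2+m\ge 0$) and that $d_\ell$ and $f_\ell$ grow linearly in $\ell+m$. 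Taking the maximum over $1\le\ell\le n$ should give $\norm{R}_2 \le n+m+\tfrac{3}{2}$, which accounts for the numerator in both displayed bounds (noting that $n+m+\tfrac32$ specializes to $n+\tfrac52$ when $m=1$).

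For $\norm{R^{-1}}_2$ in the case $m\ge 2$, I would use $\norm{R^{-1}}_2^2 = 1/\lambda_{\min}(R^\top R) = 1/\lambda_{\min}(C+D)$ and apply a Ger\v sgorin argument to $C+D$. Although the text notes that Ger\v sgorin discs on $C+D$ all touch the origin when $m=1$, for $m\ge2$ the diagonal entries of $C+D$ dominate the off-diagonal sums strictly, with the smallest disc sitting at or beyond $(m-\tfrac{3}{2})^2$. This yields $\norm{R^{-1}}_2 \le 1/(m-\tfrac32)$, matching the denominator exactly.

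The case $m=1$ is the real obstacle, and here I would abandon the spectral route in favor of back-substitution. The $k$th column of $R^{-1}$ satisfies the two-step recurrence
\begin{equation}
r_{j,k} = \frac{e_j\,r_{j+1,k}+f_j\,r_{j+2,k}}{d_j}, \quad j<k, \qquad r_{k,k}=\tfrac{1}{d_k}, \qquad r_{j,k}=0 \text{ for } j>k,
\end{equation}
and it suffices to bound $\norm{R^{-1}}_2\le\sqrt{\norm{R^{-1}}_1\norm{R^{-1}}_\infty}$. From the asymptotics at $m=1$, $d_\ell,f_\ell \sim \ell/2$ and $e_\ell\sim 1$, so the amplification factor from $r_{j+1,k}, r_{j+2,k}$ to $r_{j,k}$ is of the form $1 + O(1/\ell^2)$; telescoping the logarithms and comparing with $\sum \ell^{-2} = \pi^2/6$ (whence the $\tfrac{7\pi^2}{8}$) keeps the product bounded by the exponential constant $4e^{1+7\pi^2/8}$. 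The remaining $[2+\log n]$ factor emerges when summing column contributions in $\norm{R^{-1}}_\infty$, which behaves like a harmonic sum $\sum_{k=1}^{n} 1/k$.

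The principal technical difficulty lies in the $m=1$ analysis: one must control both the multiplicative amplification of the back-substitution recurrence uniformly in $\ell$ (including the small-$\ell$ regime where the asymptotics of $d_\ell,e_\ell,f_\ell$ deviate from their leading behavior) and the harmonic-type accumulation across columns, and tune the constants so that the final product gives the stated clean form. I expect that the $\tfrac{7\pi^2}{8}$ coefficient arises from a careful accounting that distinguishes the $e_j/d_j$ and $f_j/d_j$ contributions and handles the two-step nature of the recurrence, while the $+1$ in the exponent and the constant $4$ absorb initial-term corrections from $\ell=1,2$.
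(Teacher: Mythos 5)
Your treatment of $\norm{R}_2$ is fine and essentially matches the paper: the bound $\norm{R}_2\le\sqrt{\norm{R}_1\norm{R}_\infty}\le n+m+\tfrac32$ follows from the elementwise estimates $d_\ell\le\tfrac{\ell+2m}{2}$, $e_\ell\le1$, $f_\ell\le\tfrac{\ell+1}{2}$, which is exactly the paper's Lemma~\ref{lemma:upper} (the paper then invokes singular-value analogues of Ger\v sgorin bounds on $R$ itself, which reduce to the same row and column sums). For $m\ge2$, your plan to apply Ger\v sgorin to $C+D=R^\top R$ is a genuinely different route: the paper instead proves the row-sum inequality $d_\ell-e_\ell-f_\ell\ge m-\tfrac32$ (Lemma~\ref{lemma:lower}, a Hurwitz-polynomial argument) and bounds $\sigma_{\min}(R)$ directly. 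Your claim is plausible---asymptotically in $\ell$ the Ger\v sgorin lower bound for the $\ell$th row of $C+D$ tends to $(m-\tfrac12)(m-\tfrac32)\ge(m-\tfrac32)^2$---but you assert it rather than prove it, and making it rigorous requires polynomial-inequality work at least as heavy as the paper's lemma, since the entries of $R^\top R$ are products of pairs of the $d,e,f$.

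The genuine gap is the $m=1$ case, and it is structural rather than a missing detail. At $m=1$ one has $e_\ell/d_\ell\approx\tfrac{2}{\ell+1}$ and $f_\ell/d_\ell\approx1-\tfrac1\ell$, so the one-step transfer matrix of your back-substitution recurrence,
\begin{equation*}
S_\ell=\begin{pmatrix} e_\ell/d_\ell & f_\ell/d_\ell\\ 1 & 0\end{pmatrix}\approx\begin{pmatrix} \tfrac{2}{\ell} & 1-\tfrac{1}{\ell}\\ 1 & 0\end{pmatrix},
\end{equation*}
has spectral radius $\approx1+\tfrac{1}{2\ell}$: the weak coupling $e_\ell/d_\ell=\OO(1/\ell)$ between the even- and odd-indexed sub-chains creates an expanding mode. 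The amplification is therefore $1+\OO(1/\ell)$, \emph{not} $1+\OO(1/\ell^2)$, and the cumulative products grow algebraically like $\sqrt{k/j}$ instead of staying bounded. Consequently $(R^{-1})_{jk}\sim1/\sqrt{jk}$, so the column sums stay $\OO(1)$ but the first row sum is $\sum_{k}1/\sqrt{k}\sim2\sqrt{n}$, i.e.\ $\norm{R^{-1}}_\infty$ grows like $\sqrt n$. Your proposed bound $\sqrt{\norm{R^{-1}}_1\norm{R^{-1}}_\infty}$ is then of order $n^{1/4}$ at best---polynomial, and provably unable to deliver the stated $\left[2+\log n\right]$ factor (it is also far above the paper's observed $\norm{R^{-1}}_2\approx\tfrac2\pi\log n$). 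The paper escapes this trap by bounding $\norm{R^{-1}}_2\le\norm{R^{-1}}_F$ and writing $R^{-1}$ explicitly as a unit block-upper-triangular matrix of products $c_i\cdots c_{j-1}$ times $\diag(a_j^{-1})$ with $2\times2$ blocks: the $\sqrt{j/i}$ growth of $\prod\norm{c_\ell}_\infty$ is cancelled inside the Frobenius sum by the decay $\norm{a_j^{-1}}_\infty\le2/(j-\tfrac12)$, and what survives are harmonic sums producing the logarithm. The $\infty$-norm route cannot see this cancellation because the worst row of $R^{-1}$ genuinely has sum of order $\sqrt n$. Incidentally, the constant's origin is $\tfrac74\sum_{\ell\ge1}(\ell-\tfrac12)^{-2}=\tfrac74\cdot\tfrac{\pi^2}{2}=\tfrac{7\pi^2}{8}$, not $\sum\ell^{-2}=\pi^2/6$; this misattribution is a symptom of the same issue, namely that the $\OO(1/\ell)$ terms your accounting discards are in fact the dominant ones.
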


Figure~\ref{fig:errortime} shows the numerical results illustrating the rapidity of the algorithm for the HHD and its well-conditioning. Coefficients of the spheroidal and toroidal components are drawn from the standard normal distribution, and the components are differentiated and expressed in the $Z_{\ell,m}$ basis, and separated by the HHD. The relative $\ell^2$-norm of the error is depicted, where the well-conditioning described in theorem~\ref{theorem:kappa} is borne out in practice as a statistical error bound of $\OO(\sqrt{\kappa_2(M)}\varepsilon)$. Our implementation of the algorithm is freely available in~\cite{MultivariateOrthogonalPolynomials}.

\begin{figure}[t]
\begin{center}
\begin{tabular}{cc}
\hspace*{-0.2cm}\includegraphics[width=0.53\textwidth]{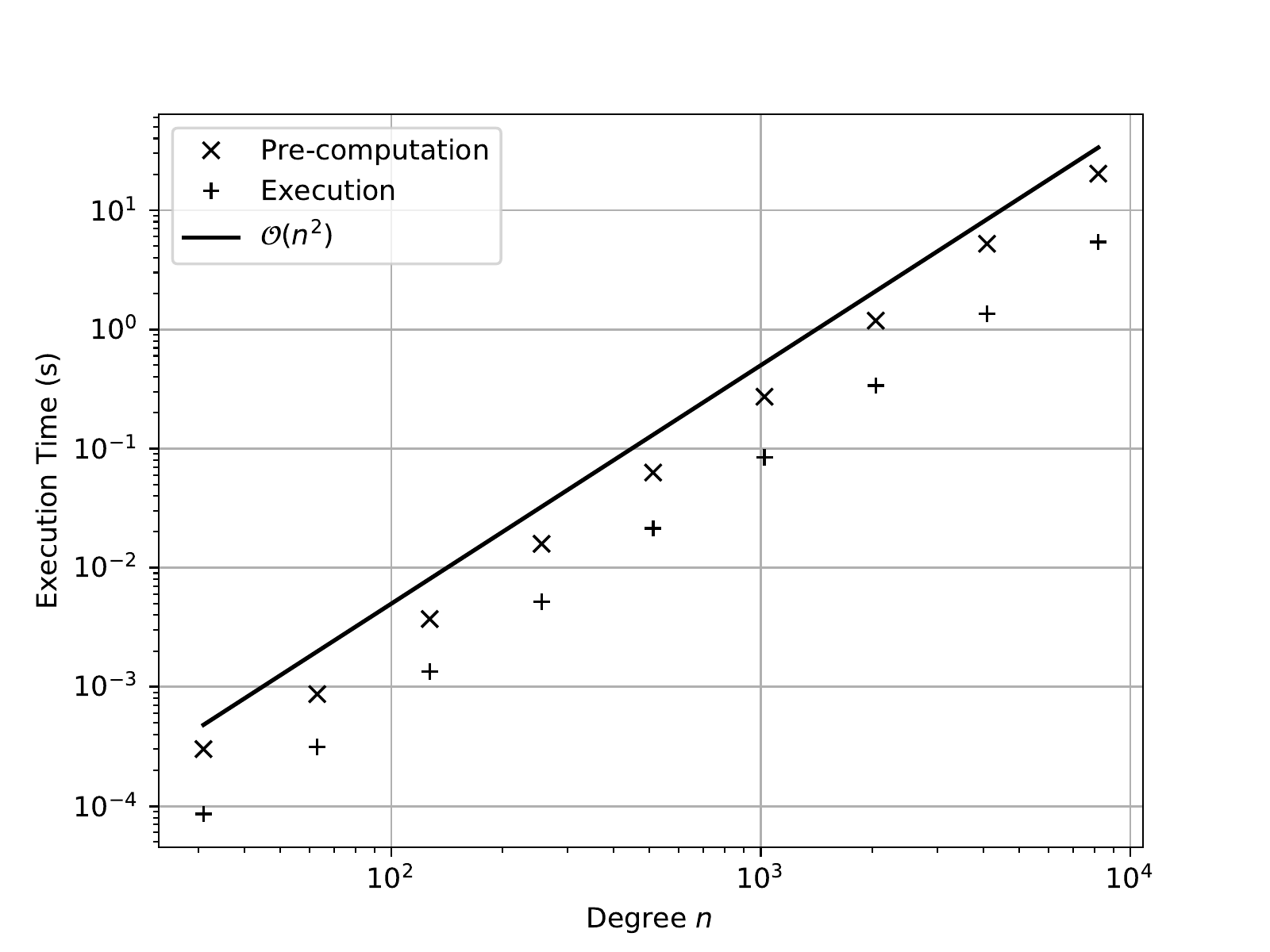}&
\hspace*{-0.65cm}\includegraphics[width=0.53\textwidth]{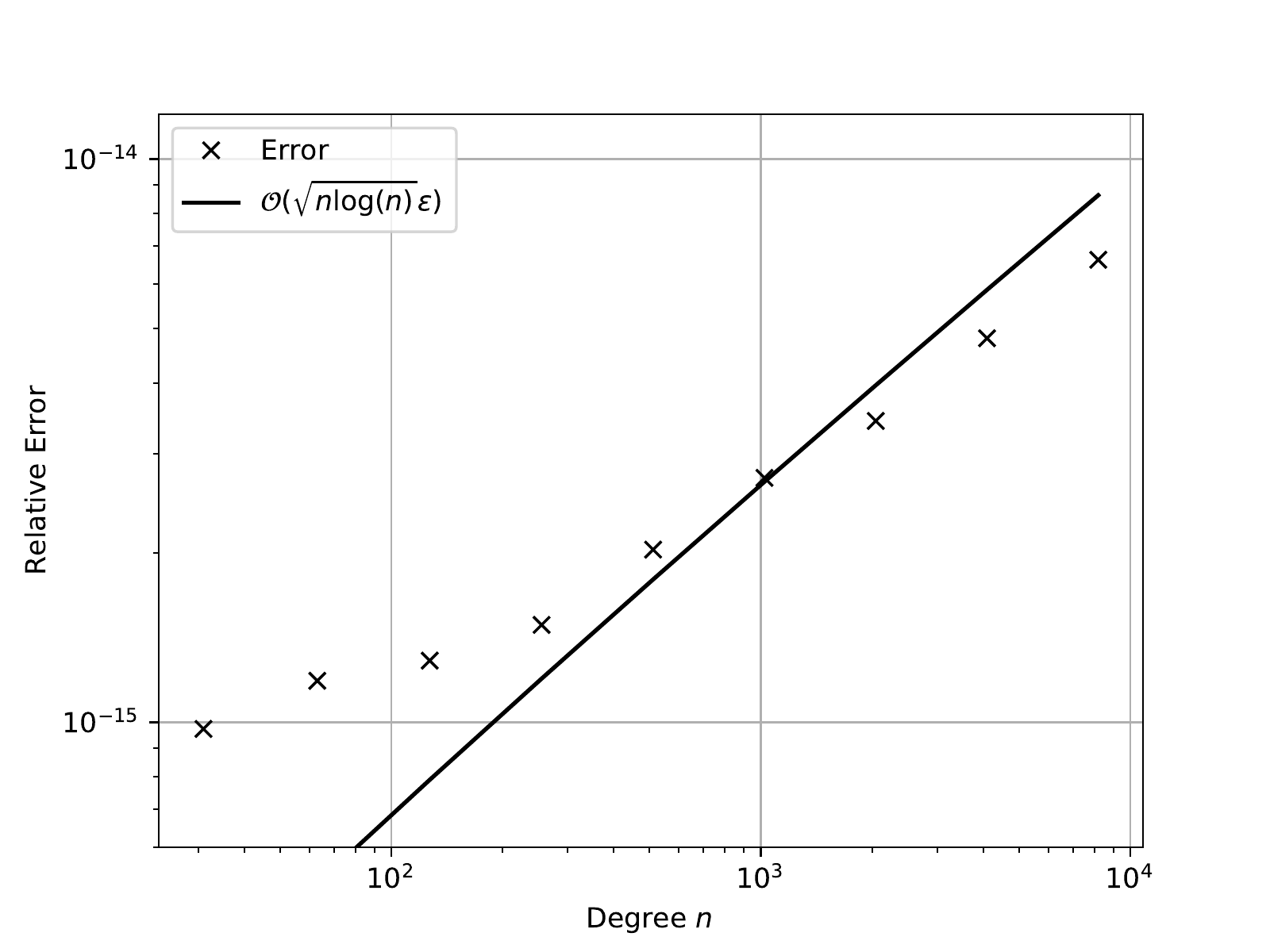}\\
\end{tabular}
\caption{Left: Pre-computation and execution times of the Helmholtz--Hodge decomposition. In both plots, observations are averaged over $10$ iterations to reduce the variance. Right: relative $\ell^2$-norm of the standard normally distributed coefficients of the spheroidal and toroidal components after differentiation and decomposition with $V_{0,0}^s = V_{0,0}^t = 0$.}
\label{fig:errortime}
\end{center}
\end{figure}

\section*{Acknowledgments}

We thank Sheehan Olver for providing a freely available banded $QR$ factorization that respects the bandwidth. We thank the Natural Sciences and Engineering Research Council of Canada (RGPIN-2017-05514), Universit\'e Lyon 1 and la r\'egion Rh\^one--Alpes for providing the financial support for the first author to visit the University of Manitoba for three months.

\bibliography{/Users/Mikael/Bibliography/Mik}

\appendix

\section{Proof of Theorem~\ref{theorem:kappa}}\label{appendix:kappa}

\begin{lemma}\label{lemma:upper}
The following inequalities hold for $\ell\in\N$ and $m\in\N$,
\begin{equation}
d_\ell \le \frac{\ell+2m}{2},\qquad e_\ell \le 1,\quad{\rm and}\quad f_\ell \le \frac{\ell+1}{2}.
\end{equation}
\end{lemma}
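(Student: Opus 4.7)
The three bounds are each polynomial inequalities in disguise: both sides are non-negative, so it suffices to square, clear denominators, and verify that a polynomial in $\ell$ and $m$ is non-negative on the relevant range of integers. I would handle the three cases in order of increasing complexity.

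The bound $e_\ell \le 1$ is essentially immediate. Squaring and cross-multiplying turns it into $\ell(\ell+2m+1) \le (\ell+m)(\ell+m+1)$, whose difference is $m(m+1)$, visibly non-negative.

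For $d_\ell \le (\ell+2m)/2$, I would square, cross-multiply by the (positive) denominator $(\ell+m)(2\ell+2m-1)(2\ell+2m+1)$, and cancel one factor of $\ell+2m$ from each side to arrive at
\begin{equation*}
4(\ell+m-1)^2(\ell+m+1)(\ell+2m+1) \le (\ell+m)(\ell+2m)(2\ell+2m-1)(2\ell+2m+1).
\end{equation*}
The change of variable $k := \ell+m$ is the key simplification: the right-hand side collapses to $k(k+m)(4k^2-1)$ and the left-hand side to $4(k-1)^2(k+1)(k+m+1)$. Expanding both in $k$ and $m$ and subtracting, I expect every term involving $k^4$ and $mk^3$ to cancel, leaving a difference of the shape $4m(k^2-1) + 3mk + (7k^2-4)$, which is manifestly non-negative whenever $k\ge 1$ and $m\ge 0$.

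For $f_\ell \le (\ell+1)/2$, I would perform the analogous manoeuvre (square, cross-multiply, cancel one factor of $\ell+1$) to reduce the claim to
\begin{equation*}
4\ell(\ell+m)(\ell+m+2)^2 \le (\ell+1)(\ell+m+1)(2\ell+2m+1)(2\ell+2m+3).
\end{equation*}
Here no substitution really helps; instead I would expand both sides as polynomials in $\ell$ and $m$ and verify that the leading $4\ell^4$ and $4m\ell^3$ (and their companions) cancel exactly. The surviving polynomial should be a sum of monomials with strictly positive rational coefficients in $\ell$ and $m$ of total degree at most $3$, which certifies the inequality on all of $\N\times\N$ at once.

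The only real obstacle is careful bookkeeping in the expansions for $d_\ell$ and $f_\ell$; neither inequality is subtle, and in both cases the proof reduces to recognising the difference of the two squared sides as a non-negative polynomial — once via the substitution $k=\ell+m$, and once as a monomial sum with non-negative coefficients.
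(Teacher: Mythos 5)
Your proposal is correct and is essentially the paper's own argument in a different guise: the paper absorbs the claimed bound under the radical and exhibits the radicand as $1$ minus a non-negative rational function, which is exactly the squared-and-cross-multiplied polynomial inequality you verify (your $4m(k^2-1)+3mk+7k^2-4$ is precisely the paper's numerator $4\ell^2m+8\ell m^2+4m^3+7\ell^2+17\ell m+10m^2-4m-4$ under $k=\ell+m$, and your positive-coefficient claim for the $f_\ell$ case matches the paper's numerator $4\ell^2m+8\ell m^2+4m^3+7\ell^2+19\ell m+12m^2+14\ell+11m+3$). The only cosmetic difference is the substitution $k=\ell+m$, which incidentally yields a slightly cleaner positivity certificate for $d_\ell$ than the paper's raw expansion, whose tail $10m^2-4m-4$ is positive for $m\ge1$ but not term-by-term.
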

\begin{proof}
The inequalities are demonstrated by a careful rewriting of the argument of the square root. For $d_\ell$,
\begin{align}
d_\ell & = (\ell+m-1)\sqrt{\frac{(\ell+m+1)(\ell+2m)(\ell+2m+1)}{(\ell+m)(2\ell+2m-1)(2\ell+2m+1)}},\\
& = (\ell+2m)\sqrt{\frac{(\ell+m-1)^2(\ell+m+1)(\ell+2m+1)}{(\ell+m)(\ell+2m)(2\ell+2m-1)(2\ell+2m+1)}},\\
& = \frac{\ell+2m}{2}\sqrt{1 - \frac{4\ell^2m+8\ell m^2+4m^3+7\ell^2+17\ell m+10m^2-4m-4}{(\ell+m)(\ell+2m)(2\ell+2m-1)(2\ell+2m+1)}}.
\end{align}
For $e_\ell$,
\begin{equation}\label{eq:1superineq}
e_\ell = \sqrt{\frac{\ell(\ell+2m+1)}{(\ell+m)(\ell+m+1)}} = \sqrt{1 - \frac{m(m+1)}{(\ell+m)(\ell+m+1)}}.
\end{equation}
And for $f_\ell$,
\begin{align}
f_\ell & = (\ell+m+2)\sqrt{\frac{\ell(\ell+1)(\ell+m)}{(\ell+m+1)(2\ell+2m+1)(2\ell+2m+3)}},\\
& = (\ell+1)\sqrt{\frac{\ell(\ell+m)(\ell+m+2)^2}{(\ell+1)(\ell+m+1)(2\ell+2m+1)(2\ell+2m+3)}},\\
& = \frac{\ell+1}{2}\sqrt{1-\frac{4\ell^2m+8\ell m^2 + 4m^3+7\ell^2+19\ell m + 12m^2+14\ell+11m+3}{(\ell+1)(\ell+m+1)(2\ell+2m+1)(2\ell+2m+3)}}.\label{eq:2superineq}
\end{align}
\end{proof}

\begin{lemma}\label{lemma:lower}
For $m\ge2$, every row sum of $R$ is uniformly bounded below
\begin{equation}
d_\ell-e_\ell-f_\ell \ge m-\frac{3}{2}.
\end{equation}
\end{lemma}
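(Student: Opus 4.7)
The bound $m-\tfrac{3}{2}$ coincides with $\tfrac{\ell+2m}{2} - 1 - \tfrac{\ell+1}{2}$, which is the value one obtains by replacing each of $d_\ell, e_\ell, f_\ell$ by its respective upper bound from Lemma~\ref{lemma:upper}. Since $\lim_{\ell\to\infty}(d_\ell - e_\ell - f_\ell) = m-\tfrac{3}{2}$, the inequality is asymptotically sharp, so the proof must capture a simultaneous cancellation among the three deficits $\tfrac{\ell+2m}{2}-d_\ell$, $1-e_\ell$, and $\tfrac{\ell+1}{2}-f_\ell$. My plan is to work with the factored expressions $d_\ell = \tfrac{\ell+2m}{2}\sqrt{1-P_d}$, $e_\ell = \sqrt{1-P_e}$ with $P_e = \tfrac{m(m+1)}{(\ell+m)(\ell+m+1)}$, and $f_\ell = \tfrac{\ell+1}{2}\sqrt{1-P_f}$ from the proof of Lemma~\ref{lemma:upper}, combined with carefully chosen inequalities on the square roots rather than the (too crude) pointwise upper bounds alone.

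The identity above rewrites the desired inequality as
\[
\tfrac{\ell+2m}{2}\bigl(1-\sqrt{1-P_d}\bigr) \;\le\; \bigl(1-\sqrt{1-P_e}\bigr) + \tfrac{\ell+1}{2}\bigl(1-\sqrt{1-P_f}\bigr).
\]
I would apply the elementary bounds $\sqrt{1-x} \le 1 - x/2$ (valid for $x \in [0,1]$) to the $e_\ell$ and $f_\ell$ terms, and the complementary sharp lower bound $\sqrt{1-x} \ge \tfrac{2(1-x)}{2-x}$ to the $d_\ell$ term; the latter follows from $1 + \sqrt{1-x} \ge 2-x$, equivalently from the chord bound $\sqrt{1-x} \ge 1-x$ for the concave function $\sqrt{1-x}$. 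These bounds are tight at both $x=0$ and $x=1$ and capture the correct $O(x^2)$ behaviour at $x=0$. Substituting reduces the desired inequality to
\[
\frac{(\ell+2m)\,P_d}{2-P_d} \;\le\; P_e + \frac{(\ell+1)\,P_f}{2}.
\]

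The final step is to verify this rational inequality by substituting the explicit forms of $P_d, P_e, P_f$ and clearing denominators. The leading coefficients in $k=\ell+m$ cancel, as predicted by asymptotic tightness, and the residual polynomial is expected to factor with $8m^2 - 2m - 7$ appearing explicitly, since the asymptotic expansion
\[
d_\ell - e_\ell - f_\ell - \left(m - \tfrac{3}{2}\right) = \frac{8m^2 - 2m - 7}{16\,k^2} + O(k^{-3})
\]
identifies this as the decisive factor. It is strictly positive for $m\ge 2$ (with smallest value $8\cdot 4 - 4 - 7 = 21$), but negative at $m=1$, consistent with the entirely different logarithmic estimate required in Theorem~\ref{theorem:kappa} for that case. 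The main obstacle is thus the polynomial bookkeeping: after clearing the common denominator one obtains a polynomial of modest degree in $k$ whose coefficients, depending on $m$, must be checked to be non-negative for $m\ge 2$; the asymptotic expansion above provides a roadmap to the correct factorization, and the role of the hypothesis $m\ge 2$ becomes transparent once the factor $8m^2-2m-7$ is exhibited.
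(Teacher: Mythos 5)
Your setup is sound and, in fact, structurally close to the paper's own proof: the rearrangement via $m-\tfrac{3}{2} = \tfrac{\ell+2m}{2}-1-\tfrac{\ell+1}{2}$, the bound $\sqrt{1-x}\le 1-\tfrac{x}{2}$ applied to the $e_\ell$ and $f_\ell$ terms, and the chord-type bound $\sqrt{1-x}\ge \tfrac{2(1-x)}{2-x}$ applied to $d_\ell$ are all valid (the paper instead writes $d_\ell$ with the radical in the denominator and uses the marginally weaker $1/\sqrt{1+x}\ge 1-\tfrac{x}{2}$), and your asymptotic coefficient $\tfrac{8m^2-2m-7}{16(\ell+m)^2}$ checks out numerically. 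But the proposal stops exactly where the real work begins: the reduced rational inequality $\tfrac{(\ell+2m)P_d}{2-P_d}\le P_e+\tfrac{(\ell+1)P_f}{2}$ is never verified, only deferred to ``polynomial bookkeeping'' with a ``roadmap.'' The paper's own computation, with nearly identical bounds, shows that this bookkeeping is where the difficulty lies: the cleared numerator is a degree-$5$ polynomial in $\ell$ whose coefficients are \emph{not} manifestly nonnegative for $m\ge 2$; they are positive only when re-expanded in the shifted variable $m-3$, the case $m=2$ produces genuinely negative coefficients ($-3660\ell-4200$ in the paper's numerator), and that case requires a further expansion in $\ell-2$ together with a direct verification at $\ell=1$, $m=2$. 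Your asymptotic expansion controls only large $\ell+m$ and cannot substitute for positivity checks at the small-$(\ell,m)$ corner, which is precisely where the bound is most delicate.

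A second, related gap is that your structural prediction is unsubstantiated: the leading asymptotic coefficient being proportional to $8m^2-2m-7$ does not imply that the cleared polynomial ``factors with $8m^2-2m-7$ appearing explicitly,'' and no such factorization occurs in the paper's analogous computation (there the $\ell^5$ coefficient is $4(8m^2-2m-7)$, but the lower-order coefficients are unrelated polynomials in $m$). So the claim that the role of the hypothesis $m\ge 2$ ``becomes transparent'' once this factor is exhibited is wishful rather than demonstrated. On the positive side, your tighter bound for $d_\ell$ is strictly sharper than the paper's: at $\ell=1$, $m=2$ your reduced inequality holds with room to spare (approximately $0.776\le 0.905$), whereas the paper's rational lower bound falls short there ($\approx -0.106$ added to $m-\tfrac{3}{2}$, which is why the paper needs the separate check of $d_1-e_1-f_1>\tfrac{1}{2}$). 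If you carried the expansion through, your route could plausibly eliminate that special case --- but as it stands the decisive positivity verification is missing, and the proposal is a plan rather than a proof.
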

\begin{proof}
We must refine some estimates from the previous lemma. Using
\begin{equation}
\sqrt{1+x} \le 1+\frac{x}{2},\quad\forall x\ge-1,
\end{equation}
together with Eqs.~\eqref{eq:1superineq} and~\eqref{eq:2superineq}, we find
\begin{align}
e_\ell & \le 1-\frac{m(m+1)}{2(\ell+m)(\ell+m+1)},\\
f_\ell & \le \frac{\ell+1}{2}\left[1-\frac{4\ell^2m+8\ell m^2 + 4m^3+7\ell^2+19\ell m + 12m^2+14\ell+11m+3}{2(\ell+1)(\ell+m+1)(2\ell+2m+1)(2\ell+2m+3)}\right].
\end{align}
Similarly, using
\begin{equation}
\frac{1}{\sqrt{1+x}} \ge 1-\frac{x}{2},\quad\forall x\ge-1,
\end{equation}
with
\begin{align}
d_\ell & = \dfrac{\ell+2m}{\sqrt{\dfrac{(\ell+m)(\ell+2m)(2\ell+2m-1)(2\ell+2m+1)}{(\ell+m-1)^2(\ell+m+1)(\ell+2m+1)}}},\\
& = \dfrac{\ell+2m}{\sqrt{4+\dfrac{4\ell^2m+8\ell m^2+4m^3+7\ell^2+17\ell m + 10m^2-4m-4}{(\ell+m-1)^2(\ell+m+1)(\ell+2m+1)}}},\\
& \ge \frac{\ell+2m}{2}\left[1-\frac{4\ell^2m+8\ell m^2+4m^3+7\ell^2+17\ell m + 10m^2-4m-4}{8(\ell+m-1)^2(\ell+m+1)(\ell+2m+1)}\right].
\end{align}
The row sum is therefore
\begin{align}
d_\ell-e_\ell-f_\ell \ge m-\frac{3}{2} + & \Big[(32m^2-8m-28)\ell^5 + (192m^3-32m^2-232m-77)\ell^4\nonumber\\
& + (448m^4-48m^3-776m^2-426m-8)\ell^3\nonumber\\
& + (512m^5-32m^4-1288m^3-877m^2+28m+56)\ell^2\nonumber\\
& + (288m^6-8m^5-1036m^4-768m^3+136m^2+164m+12)\ell\label{eq:lowerbound}\\
& + (64m^7-320m^5-240m^4+124m^3+156m^2+36m)\Big]\nonumber\\
& \Bigg/ \Big[(\ell+m-1)^2(\ell+m)(\ell+m+1)(\ell+2m+1)(2\ell+2m+1)(2\ell+2m+3)\Big].\nonumber
\end{align}
Every coefficient of the numerator of the rational function on the right-hand side of Eq.~\eqref{eq:lowerbound} expressed as a polynomial in $\ell$ is a Hurwitz polynomial in the variable $m-3$, that is, it is a polynomial with positive coefficients. For $m=2$, the numerator is almost Hurwitz in $\ell$,
\begin{equation}
84\ell^5 + 867\ell^4+2820\ell^3+2172\ell^2-3660\ell-4200.
\end{equation}
In fact, it is Hurwitz in the variable $\ell-2$. To complete the proof, we confirm directly for $\ell=1$ and $m=2$ that
\begin{align}
d_1-e_1-f_1 & = \sqrt{\frac{32}{7}} - \sqrt{\frac{1}{2}} - \sqrt{\frac{25}{42}} = \frac{\sqrt{192}-\sqrt{21}-\sqrt{25}}{\sqrt{42}},\\
& > \frac{\sqrt{192}-10}{\sqrt{42}} = \frac{192-100}{(\sqrt{192}+10)\sqrt{42}},\\
& > \frac{92}{(\sqrt{196}+10)\sqrt{42}} = \frac{23}{6\sqrt{42}},\\
& > \frac{21}{6\sqrt{42}} = \frac{\sqrt{42}}{12} > \frac{\sqrt{36}}{12} = \frac{1}{2}.
\end{align}
\end{proof}

{\em Proof of Theorem~\ref{theorem:kappa}.}
For every $m$, we find an upper bound for the largest singular value of $R$ based on~\cite{Qi-56-105-84},
\begin{align}
\sigma_1 & \le \max\left[\max_{1\le\ell\le n}\{ d_\ell + e_\ell + f_\ell\}, \max_{1\le\ell\le n}\{ d_\ell + e_{\ell-1} + f_{\ell-2}\}\right],\\
& \le \max_{1\le\ell\le n}\{ d_\ell + e_\ell + f_\ell\} = d_n+e_n+f_n \le n+m+\frac{3}{2},
\end{align}
where the last inequality follows from Lemma~\ref{lemma:upper}. Note that if the index $\ell<1$, we set the result to $0$.

For $m\ge2$, we find a lower bound for the smallest singular value of $R$ similarly based on~\cite{Qi-56-105-84},
\begin{align}
\sigma_n & \ge \min\left[\min_{1\le\ell\le n}\{ d_\ell - e_\ell - f_\ell\}, \min_{1\le\ell\le n}\{ d_\ell - e_{\ell-1} - f_{\ell-2}\}\right],\\
& \ge \min_{1\le\ell\le n}\{ d_\ell - e_\ell - f_\ell\} \ge m-\frac{3}{2},
\end{align}
where the last inequality follows from Lemma~\ref{lemma:lower}.

For $m=1$, we find an upper bound on the norm of the inverse based on the Frobenius norm~\cite[(2.3.7)]{Golub-Van-Loan-13},
\begin{equation}
\norm{R^{-1}}_2 \le \norm{R^{-1}}_F.
\end{equation}

We represent $R^{-1}$ via a block semi-separable times block diagonal form~\cite[(12.2.2)]{Golub-Van-Loan-13}. Let
\begin{equation}
R = \begin{pmatrix} a_1 & b_1\\ & a_2 & b_2\\ & & a_3 & b_3\\ & & & \ddots & \ddots\end{pmatrix} = \begin{pmatrix} a_1\\ & a_2\\ & & a_3\\ & & & \ddots\end{pmatrix}\begin{pmatrix} I & -c_1\\ & I & -c_2\\ & & I & -c_3\\ & & & \ddots & \ddots\end{pmatrix},
\end{equation}
where
\begin{equation}
a_\ell = \begin{pmatrix} d_{2\ell-1} & -e_{2\ell-1}\\ & d_{2\ell}\end{pmatrix},\qquad b_\ell = -\begin{pmatrix} f_{2\ell-1}\\ e_{2\ell} & f_{2\ell}\end{pmatrix},
\end{equation}
and where
\begin{equation}
a_\ell^{-1} = \begin{pmatrix} d_{2\ell-1}^{-1} & \frac{e_{2\ell-1}}{d_{2\ell-1}d_{2\ell}}\\ & d_{2\ell}^{-1}\end{pmatrix},\qquad c_\ell = -a_\ell^{-1}b_\ell = \begin{pmatrix} \frac{f_{2\ell-1}d_{2\ell} + e_{2\ell-1}e_{2\ell}}{d_{2\ell-1}d_{2\ell}} & \frac{e_{2\ell-1}f_{2\ell}}{d_{2\ell-1}d_{2\ell}}\\ \frac{e_{2\ell}}{d_{2\ell}} & \frac{f_{2\ell}}{d_{2\ell}}\end{pmatrix}.
\end{equation}
Then~\cite[(12.2.3)]{Golub-Van-Loan-13}
\begin{equation}
R^{-1} = \begin{pmatrix} I & c_1 & c_1c_2 & c_1c_2c_3 & \cdots\\ & I & c_2 & c_2c_3 & \cdots\\ & & I & c_3 & \cdots\\ & & & \ddots & \ddots\end{pmatrix}\begin{pmatrix} a_1^{-1}\\ & a_2^{-1}\\ & & a_3^{-1}\\ & & & \ddots\end{pmatrix}.
\end{equation}
Now, the Frobenius norm of $R^{-1}\in\mathbb{R}^{2n\times 2n}$, say, is given in terms of the sum of the block Frobenius norms
\begin{equation}
\norm{R^{-1}}_F^2 = \sum_{j=1}^n\sum_{i=1}^j \norm{c_i\cdots c_{j-1} a_j^{-1}}_F^2.
\end{equation}
Here, the empty product $c_j\cdots c_{j-1} \equiv I$. For each $C\in\mathbb{R}^{2\times2}$ block, we relate the Frobenius norm to the $\infty$-norm, $\norm{C}_F^2 \le 4\norm{C}_\infty^2$~\cite[(2.3.7) \& (2.3.11)]{Golub-Van-Loan-13}, and by submultiplicativity of $\infty$-norms for the products
\begin{equation}
\norm{R^{-1}}_F^2 \le 4\sum_{j=1}^n\sum_{i=1}^j \norm{c_i}_\infty^2 \cdots \norm{c_{j-1}}_\infty^2 \norm{a_j^{-1}}_\infty^2.
\end{equation}
All we need to do is bound the $\infty$-norms of $a_\ell^{-1}$ and $c_\ell$. For $a_\ell^{-1}$,
\begin{align}
\norm{a_\ell^{-1}}_\infty & = \max\left\{\frac{1}{d_{2\ell-1}}\left(1+\frac{e_{2\ell-1}}{d_{2\ell}}\right), \frac{1}{d_{2\ell}}\right\},\\
& = \frac{1}{d_{2\ell-1}}\left(1+\frac{e_{2\ell-1}}{d_{2\ell}}\right),\quad{\rm since}\quad d_{\ell+1}>d_\ell,\\
& \le \frac{2}{d_{2\ell-1}} = \frac{4}{2\ell-1}\sqrt{\frac{\ell(2\ell-\frac{1}{2})(2\ell+\frac{1}{2})}{(\ell+1)(2\ell+1)^2}},\\
& \le \frac{4}{2\ell-1}\left(1 - \frac{32\ell^2+21\ell+4}{8(\ell+1)(2\ell+1)^2}\right) \le \frac{2}{\ell-\frac{1}{2}}.
\end{align}
For $c_\ell$, we start with the following observations
\begin{align}
\frac{e_\ell}{d_\ell} & = \frac{2}{\ell+1}\sqrt{\frac{(\ell+1)^2(\ell+\frac{1}{2})(\ell+\frac{3}{2})}{\ell(\ell+2)^3}},\\
& = \frac{2}{\ell+1}\sqrt{1-\frac{8\ell^3+25\ell^2+18\ell-3}{4\ell(\ell+2)^3}},\\
& \le \frac{2}{\ell+1}\left(1-\frac{8\ell^3+25\ell^2+18\ell-3}{8\ell(\ell+2)^3}\right) \le \frac{2}{\ell+1},
\end{align}
and
\begin{align}
\frac{f_\ell}{d_\ell} & = \sqrt{\frac{(\ell+1)^3(\ell+3)(2\ell+1)}{\ell(\ell+2)^3(2\ell+5)}},\\
& = \sqrt{1-\frac{2}{\ell}+\frac{10\ell^3+64\ell^2+128\ell+83}{\ell(\ell+2)^3(2\ell+5)}},\\
& \le 1-\frac{1}{\ell}+\frac{10\ell^3+64\ell^2+128\ell+83}{2\ell(\ell+2)^3(2\ell+5)},
\end{align}
and since $10\ell^3+64\ell^2+128\ell+83 \le 10\ell^3+65\ell^2+140\ell+100 = 5(\ell+2)^2(2\ell+5)$,
\begin{equation}
\frac{f_\ell}{d_\ell} \le 1-\frac{1}{\ell}+\frac{5}{2\ell(\ell+2)}.
\end{equation}
Thus,
\begin{align}
\norm{c_\ell}_\infty & = \max\left\{\frac{f_{2\ell-1}}{d_{2\ell-1}}+\frac{e_{2\ell-1}}{d_{2\ell-1}}\frac{f_{2\ell}}{d_{2\ell}} + \frac{e_{2\ell-1}}{d_{2\ell-1}}\frac{e_{2\ell}}{d_{2\ell}}, \frac{f_{2\ell}}{d_{2\ell}} + \frac{e_{2\ell}}{d_{2\ell}}\right\},\\
& \le \max\left\{\frac{f_{2\ell-1}}{d_{2\ell-1}}+\frac{1}{\ell}\frac{f_{2\ell}}{d_{2\ell}} + \frac{1}{\ell^2}, \frac{f_{2\ell}}{d_{2\ell}} + \frac{1}{\ell}\right\}.
\end{align}
The first term is
\begin{align}
\frac{f_{2\ell-1}}{d_{2\ell-1}}+\frac{1}{\ell}\frac{f_{2\ell}}{d_{2\ell}} + \frac{1}{\ell^2} & = 1 - \frac{1}{2\ell-1}+\frac{1}{\ell}+\frac{1}{2\ell^2}+\frac{5}{2}\left[\frac{1}{(2\ell-1)(2\ell+1)} + \frac{1}{2\ell^2(\ell+2)}\right],\\
& \le 1 + \frac{\frac{1}{2}}{\ell-\frac{1}{2}}+\frac{1}{2(\ell-\frac{1}{2})^2}+\frac{5}{2}\left[\frac{\frac{1}{4}}{(\ell-\frac{1}{2})^2} + \frac{\frac{1}{4}}{(\ell-\frac{1}{2})^2}\right],\\
& = 1 + \frac{\frac{1}{2}}{\ell-\frac{1}{2}}+\frac{\frac{7}{4}}{(\ell-\frac{1}{2})^2},
\end{align}
and the second term is smaller
\begin{align}
\frac{f_{2\ell}}{d_{2\ell}} + \frac{1}{\ell} & = 1+\frac{1}{2\ell}+\frac{5}{4\ell(2\ell+2)} \le 1+\frac{\frac{1}{2}}{\ell-\frac{1}{2}} + \frac{\frac{5}{8}}{(\ell-\frac{1}{2})^2}.
\end{align}
Thus, our rather crude upper bound is
\begin{equation}
\norm{c_\ell}_\infty \le 1 + \frac{\frac{1}{2}}{\ell-\frac{1}{2}}+\frac{\frac{7}{4}}{(\ell-\frac{1}{2})^2}.
\end{equation}
Together with $\log(1+x)\le x$ for $x\ge-1$, we find
\begin{align}
\prod_{\ell=i}^{j-1} \norm{c_\ell}_\infty & \le \prod_{\ell=i}^{j-1}\left(1 + \frac{\frac{1}{2}}{\ell-\frac{1}{2}}+\frac{\frac{7}{4}}{(\ell-\frac{1}{2})^2}\right),\\
& = \exp\left\{\sum_{\ell=i}^{j-1}\log\left[1 + \frac{\frac{1}{2}}{\ell-\frac{1}{2}}+\frac{\frac{7}{4}}{(\ell-\frac{1}{2})^2}\right]\right\},\\
& \le \exp\left\{\sum_{\ell=i}^{j-1}\left[\frac{\frac{1}{2}}{\ell-\frac{1}{2}}+\frac{\frac{7}{4}}{(\ell-\frac{1}{2})^2}\right]\right\},\\
& \le \exp\left\{\sum_{\ell=i}^j\frac{\frac{1}{2}}{\ell-\frac{1}{2}}+\sum_{\ell=1}^\infty\frac{\frac{7}{4}}{(\ell-\frac{1}{2})^2}\right\},\\
& = \exp\left\{\sum_{\ell=i}^j\frac{\frac{1}{2}}{\ell-\frac{1}{2}}+\frac{7\pi^2}{8}\right\},\\
& \le \exp\left\{\frac{7\pi^2}{8}+1+\int_i^j\frac{\frac{1}{2}}{x-\frac{1}{2}}\ud x\right\},\\
& = \exp\left\{\frac{7\pi^2}{8}+1+\log\sqrt{\frac{j-\frac{1}{2}}{i-\frac{1}{2}}}\right\} = e^{1+\frac{7\pi^2}{8}}\sqrt{\frac{j-\frac{1}{2}}{i-\frac{1}{2}}}.
\end{align}
Then, by the same arguments,
\begin{align}
\sum_{i=1}^j\prod_{\ell=i}^{j-1}\norm{c_\ell}_\infty^2 & \le e^{2+\frac{7\pi^2}{4}}(j-\tfrac{1}{2})\sum_{i=1}^j\frac{1}{i-\frac{1}{2}},\\
& \le e^{2+\frac{7\pi^2}{4}}(j-\tfrac{1}{2})\left[2+\log(2j-1)\right].
\end{align}
Finally,
\begin{align}
\norm{R^{-1}}_F^2 = 4\sum_{j=1}^n\norm{a_j^{-1}}_\infty^2\sum_{i=1}^j\prod_{\ell=i}^{j-1}\norm{c_\ell}_\infty^2 & \le \sum_{j=1}^n\frac{16}{(j-\frac{1}{2})^2}e^{2+\frac{7\pi^2}{4}}(j-\tfrac{1}{2})\left[2+\log(2j-1)\right],\\
& \le 16e^{2+\frac{7\pi^2}{4}}\left[2+\log(2n-1)\right]\sum_{j=1}^n\frac{1}{j-\frac{1}{2}},\\
& \le 16e^{2+\frac{7\pi^2}{4}}\left[2+\log(2n-1)\right]^2.
\end{align}

\begin{remark}
\begin{enumerate}
\item The constant $4e^{1+\frac{7\pi^2}{4}}$ in theorem~\ref{theorem:kappa} is a rather crude overestimate. Based on numerical evidence, we conjecture the following more accurate underestimate
\begin{equation}
\norm{R^{-1}}_2 \approx \frac{2}{\pi}\log(n+\tfrac{5}{2}),\quad{\rm for}\quad n>1.
\end{equation}
\item By equivalence of norms, we also have inequalities on the $1$-norm and $\infty$-norm condition numbers of $R$, though these do not directly translate to the least-squares problem defined by $M$.
\item Surprisingly, the condition number decreases as $\abs{m}\nearrow n$ even though the distinction between different harmonics at high order is less pronounced.
\end{enumerate}
\end{remark}

\end{document}